\DeclareMathOperator{\conv}{conv}
\newcommand{\titel}{Which point sets admit a $k$-angulation?} 
\definecolor{hellblau}{rgb}{0.2,0.4,1} 
\definecolor{dunkelblau}{rgb}{0,0,0.8}
\definecolor{dunkelgruen}{rgb}{0,0.5,0}
\theoremstyle{plain} 
	\newtheorem{satz}{Satz}[] 
	\newtheorem{theorem}[satz]{Theorem}
	\newtheorem{lemma}[satz]{Lemma}
	\newtheorem{proposition}[satz]{Proposition}
	\newtheorem{conjecture}[satz]{Conjecture}
\theoremstyle{remark} 
	\newtheorem*{claim}{Claim 1}
\theoremstyle{definition} 
	\newtheorem{corollary}[satz]{Corollary}
\title%
{\textbf{\titel}%
    {\footnotetext%
        {This research was supported by the DAAD and the Go8 within the Australia--Germany Joint Research Co-operation Scheme 2011 as part of the project \emph{Problems in geometric graph theory} (Kennz. 50753217).%
        }\renewcommand\footnotemark{}%
    }%
}
\author{Michael S. Payne\footnote{Department of Mathematics and Statistics, The University of Melbourne, m.payne3@pgrad.unimelb.edu.au.} 
\and Jens M. Schmidt\footnote{Max Planck Institute for Informatics, Saarbr\"ucken, jens.schmidt@mpi-inf.mpg.de.} 
\and David R. Wood\footnote{School of Mathematical Sciences, Monash University, david.wood@monash.edu. Supported by a QEII Fellowship and a Discovery Project from the Australian Research Council.}}
\begin{document}
\maketitle

\begin{abstract}
For $k \geq 3$, a $k$-angulation is a $2$-connected plane graph in which every internal face is a $k$-gon.
We say that a point set $P$ \emph{admits} a plane graph $G$ if there is a straight-line drawing of $G$ that maps $V(G)$ onto $P$ and has the same facial cycles and outer face as $G$.
We investigate the conditions under which a point set $P$ admits a $k$-angulation and find that, for sets containing at least $2k^2$ points,  the only obstructions are those that follow from Euler's formula.
\end{abstract}


\section{Introduction}

A point set $P$ of size $n$ \emph{admits} an $n$-vertex plane graph $G$ if there is a straight-line drawing of $G$ on $P$ that has the same facial cycles and outer face as $G$.
Consider the following general problems. Firstly, given a class $\mathcal{C}$ of plane graphs and a set $P$ of points in the plane, does $P$ admit some graph in $\mathcal{C}$? 
Secondly, classify all (general position) point sets that admit at least one graph in $\mathcal{C}$.
In the present paper we consider the case where $\mathcal{C}$ is the set of $k$-angulations. 

For $k \geq 3$, a \textit{$k$-angulation} is a $2$-connected plane graph in which every internal face is a $k$-gon. 
$2$-connectedness is a natural property to require because it is equivalent to all the faces being bounded by simple cycles (see for example~\cite{Thomassen1981}).
A point in $P$ is an \emph{interior point of $P$} if it is not on the boundary of the convex hull of $P$.
Our main result is that 
all point sets in general position with at least $2k^2$ points admit a $k$-angulation unless they have too few interior points, which is a necessary condition that follows from Euler's formula. More precisely, we prove the following theorem.
We abbreviate $a\equiv b \bmod c$ as $a\equiv_c b$.

\begin{theorem}\label{mainthm}
Let $n \geq 2k^2$ and $j \equiv_{k-2} k-n$ with $0 \leq j \leq k-3$.
A set of $n$ points in general position in the plane
admits a $k$-angulation if and only if it has at least $j$ interior points.
\end{theorem}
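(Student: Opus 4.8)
Suppose $P$ admits a $k$-angulation $G$, and let $\ell$ be the length of its outer cycle (a cycle, by $2$-connectedness) and $F$ the number of internal faces, so $G$ has $F+1$ faces in total. Summing face lengths over all faces gives $kF+\ell=2|E(G)|$, while Euler's formula gives $|E(G)|=n+F-1$; eliminating $|E(G)|$ yields $(k-2)F=2n-2-\ell$. Since $k\equiv_{k-2}2$, reducing modulo $k-2$ gives $n-\ell\equiv_{k-2}2-n\equiv_{k-2}k-n\equiv_{k-2}j$. Now every convex-hull vertex of $P$ is an extreme point of the drawing and hence lies on the outer cycle, so each of the $n-\ell$ vertices \emph{not} on the outer cycle is an interior point of $P$. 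As $n-\ell\ge 0$ and $n-\ell\equiv_{k-2}j$ with $0\le j\le k-3$, we conclude $n-\ell\ge j$, so $P$ has at least $j$ interior points.

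\textbf{Sufficiency --- set-up.} Assume $P$ has $i\ge j$ interior points and $n\ge 2k^2$; the plan is to construct a $k$-angulation drawn on $P$. First fix the combinatorial target: let $s$ be the largest integer with $0\le s\le i$ and $s\equiv_{k-2}j$, and set $c:=i-s$, so $0\le c\le k-3$. We aim for an outer cycle $C$ passing through all $n-i$ convex-hull vertices of $P$ together with $c$ of the interior points, leaving the remaining $s$ interior points strictly inside $C$. Then $|C|=\ell:=n-s$, and by the identity above any $k$-angulation with this outer cycle has exactly $F:=(n+s-2)/(k-2)$ internal faces, a positive integer precisely because $s\equiv_{k-2}j\equiv_{k-2}2-n$; equivalently, any triangulation of the closed region bounded by $C$ that uses all $s$ enclosed interior points has exactly $(k-2)F$ triangles.

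\textbf{Sufficiency --- construction.} There are two ingredients. \emph{(i) Choosing $C$.} Starting from the convex hull, repeatedly pull one interior point onto the current boundary: pick an interior point $p$ and two consecutive boundary vertices $u,v$ for which the triangle $upv$ contains no other point of $P$ (such a pair exists, e.g. by minimising the area of $upv$), and replace the edge $uv$ by the path $u,p,v$. After $c$ such steps we obtain a simple polygon $C$ through all hull vertices and $c$ interior points, with only shallow ``dents'', containing all of $P$ in its closed region and exactly $s$ points in its interior. \emph{(ii) Filling $C$.} Triangulate the closed region bounded by $C$ using the $s$ enclosed points, and then merge the $(k-2)F$ triangles into $k$-gons, $k-2$ at a time. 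The key observation is that a set of $k-2$ triangles of a triangulation has simply-connected union bounded by a \emph{simple} $k$-cycle with no vertex in its interior exactly when it induces a subtree of the dual graph of the triangulation. Hence it suffices to choose the triangulation so that its dual --- a connected graph on $(k-2)F$ vertices --- can be partitioned into $F$ induced subtrees of size $k-2$; the resulting $k$-gons become the internal faces of a straight-line drawing on $P$ whose outer cycle is $C$, and $2$-connectedness is then immediate.

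\textbf{The main obstacle.} The heart of the argument is ingredient (ii): a connected graph on $(k-2)F$ vertices need not admit \emph{any} partition into connected pieces of size $k-2$ (stars, and more generally ``spider'' trees whose legs are all short, do not), so the triangulation cannot be taken arbitrarily. The natural device is a serpentine or fan-type triangulation whose dual is a path, or as close to one as the geometry permits, with local re-triangulations used to repair bad branchings of the dual. This is exactly where the hypothesis $n\ge 2k^2$ is used: it supplies enough points to carry out these repairs, to place the dents of (i) without mutual interference, and to deal with degenerate configurations --- point sets with very few hull vertices, or with all interior points clustered together --- where the outer cycle and the apex of a fan must be chosen with care. (Some small configurations genuinely admit no $k$-angulation despite having $j$ interior points, so a lower bound on $n$ cannot simply be dropped, though whether $2k^2$ is optimal is a separate matter.) Once all pieces are in place, checking that the result is a $k$-angulation drawn on $P$ is routine.
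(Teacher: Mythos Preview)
Your necessity argument is correct and coincides with the paper's Proposition~\ref{lem:interiorpoints}.

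For sufficiency, however, what you have written is an outline, not a proof. You correctly identify that the whole difficulty lies in ingredient~(ii)---producing a triangulation whose weak dual admits a partition into subtrees of order $k-2$---and you correctly observe that an arbitrary triangulation will not do. But you then dispatch this with phrases such as ``a serpentine or fan-type triangulation \ldots\ or as close to one as the geometry permits'' and ``local re-triangulations used to repair bad branchings'', without specifying any concrete construction, without saying which triangulation is built, which re-triangulations are performed, or why $n\ge 2k^2$ is the right bound. This is precisely the part of the theorem that requires work; the paper devotes several pages to it (a wheel triangulation about an interior point, an explicit algorithm that adds $j-1$ triangles to the outside, ``pontoon'' re-triangulations to control the dual, and a four-way case analysis to exhibit the block partition). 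Your denting construction in~(i) is fine and is a legitimate alternative to the paper's outward-growing approach, but it does not help with~(ii), which is where the theorem lives.

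There is also a genuine error: your assertion that ``$2$-connectedness is then immediate'' once each block is a dual subtree is false. Knowing that every internal face is bounded by a simple $k$-cycle does \emph{not} imply that the resulting plane graph is $2$-connected; two $k$-gons can share a single vertex and create a cut vertex. The paper treats this as a separate obligation (Proposition~\ref{2con} has $2$-connectedness as a hypothesis, not a conclusion) and, in each of its four cases, explicitly exhibits a spanning $\theta$-subgraph to certify $2$-connectedness. Any complete proof along your lines would need an analogous verification.
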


In related work, Bose and Toussaint~\cite{Bose1995,Bose1997,Toussaint1995} showed that $P$ admits a quadrangulation \textit{with $\conv(P)$ as the outer face} if and only if the number of points on the boundary of $\conv(P)$ is even.
Moreover, they showed that all point sets admit such $k$-angulations when the addition of at most $k-3$ extra points to $P$ is allowed. 
We are not aware of any results for $k$-angulations in which the outer face is allowed to be non-convex. In our work we do not allow the addition of extra points.
In a similar vein, Gritzmann et al.~\cite{gritzmohar} showed that every point set $P$ admits every $|P|$-vertex outerplanar graph 
(Castañeda and Urrutia~\cite{CastUrrutia} rediscovered this result).
Bose \cite{Bose02} gave near optimal algorithms for achieving this.


Dey et al.\ \cite{Dey1997} characterise the point sets with three vertices on the convex hull that admit a $4$- or $5$-connected triangulation. They also note that the only point sets $P$ that do not admit a $3$-connected graph (if $|P| > 3$) are the ones in convex position (the same characterization holds for $3$-edge-connected graphs). Garc{\'\i}a et al.~\cite{Garcia2009} determine the minimum number of edges in a $3$-connected graph on $P$ and characterise the point sets $P$ that admit $3$-regular $3$-connected graphs.
Schmidt and Valtr~\cite{SchmidtValtr} characterise 
the point sets that admit $3$-regular graphs and obtain a polynomial time algorithm that constructs such a $3$-regular graph if one exists. 

Regarding the complexity of embedding a given graph $G$ on a given point set $P$, Cabello~\cite{Cabello} showed that deciding whether $P$ admits a straight line drawing of $G$ is NP-complete. 
His construction used $2$-connected graphs which may have many different drawings. 
Biedl and Vatshelle~\cite{BiedlVatshelle} proved NP-completeness in the $3$-connected case. 
For more on drawing planar graphs on a given point set see~\cite{more3, more1, more4, more2}.


\section{Preliminaries}

So as to avoid any potential confusion, we begin by formally defining some terms used in the introduction.
A \emph{planar drawing} of a graph $G$ is a function $\varphi$ that maps each vertex $v \in V(G)$ to a unique point in the plane and each edge $ab \in E(G)$ to a simple curve in the plane that connects $\varphi(a)$ to $\varphi(b)$, such that the curves only intersect each other and $\varphi(V(G))$ at their endpoints.
A \emph{straight-line drawing} is a planar drawing that maps each edge to a straight line segment.
A planar drawing of $G$ determines a set of closed facial walks including one distinguished as the outer face. 
Two planar drawings of $G$ are \emph{equivalent} if they determine the same set of facial walks and they have the same outer face.
Formally, a \emph{plane graph} is a planar graph $G$ together with an equivalence class $\tilde\varphi$ of planar drawings under this equivalence relation. 
Informally, a plane graph is an embedded planar graph with a nominated outer face.
An \emph{internal vertex} of a plane graph is a vertex not on the outer face.
We say that a point set $P$ \emph{admits} a plane graph $(G, \tilde\varphi)$ if there is a straight-line drawing of $G$ that maps $V(G)$ onto $P$ and is a member of $\tilde\varphi$.
Throughout this paper, let $P$ be a set of at least $3$ points in general position in the plane; that is, no three points of $P$ are collinear.
One direction of Theorem~\ref{mainthm} is straightforward to prove. In fact we can strengthen it to apply for all $n$.

\begin{proposition}\label{lem:interiorpoints}
Suppose a point set $P$ with $n=|P| \geq k$ admits a $k$-angulation $G$. 
Let $j \equiv_{k-2} k-n$ with $0 \leq j \leq k-3$. 
Then $P$ contains at least $j$ interior points.
\end{proposition}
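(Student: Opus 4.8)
The plan is to combine one short geometric observation with a standard Euler's-formula count. First I would fix a straight-line drawing of $G$ on $P$ from the given equivalence class. Since $G$ is $2$-connected, every face, including the outer face, is bounded by a simple cycle; let $C$ be the outer cycle and $h := |V(C)|$. In the drawing $C$ is a simple polygon, and because the outer face is the unbounded region, every vertex of $G$ lies in the closed bounded region $\bar R$ determined by $C$. Hence $P \subseteq \bar R \subseteq \conv(V(C))$, so $\conv(P) = \conv(V(C))$. As $P$ is in general position, every point of $P$ on the boundary of $\conv(P)$ is in fact a vertex of $\conv(P)$, and therefore lies in $V(C)$. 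Consequently the $n - h$ points of $P$ not in $V(C)$ are all interior points of $P$, so it suffices to prove that the number of internal vertices of $G$ satisfies $n - h \geq j$.

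Next I would count edges. Let $m := |E(G)|$ and let $f$ be the number of faces. Double counting edge--face incidences gives $k(f-1) + h = 2m$, since each of the $f-1$ internal faces is a $k$-gon and the outer face is bounded by the $h$-cycle $C$; combining this with Euler's formula $n - m + f = 2$ and eliminating $f$ yields
\[
(k-2)\,m = k(n-1) - h .
\]
(The hypothesis $n \geq k$ is used only to ensure $G$ has an internal face, so this setup is non-vacuous.)

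Then I would reduce modulo $k-2$. Since $k \equiv_{k-2} 2$, the displayed identity gives $h \equiv_{k-2} k(n-1) \equiv_{k-2} 2(n-1)$, and therefore the number of internal vertices satisfies $n - h \equiv_{k-2} n - 2(n-1) = 2-n \equiv_{k-2} k-n \equiv_{k-2} j$. Since $n - h \geq 0$ and $j$ is by definition the representative of this residue class lying in the interval $[0,k-3]$, which is a complete residue system modulo $k-2$, it follows that $n - h \geq j$. Combined with the first paragraph, $P$ has at least $j$ interior points, completing the proof.

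I do not expect a serious obstacle here: almost everything is bookkeeping. The only step requiring a little care is the geometric claim that every convex-hull vertex of $P$ must appear on the outer facial cycle $C$ of the drawing; once that is in place, the rest is the Euler-formula manipulation and the elementary observation about residues.
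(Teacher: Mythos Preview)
Your proof is correct and follows essentially the same approach as the paper's: identify the outer cycle $C$, use Euler's formula together with the edge--face double count to show $n-|V(C)|\equiv_{k-2} k-n$, and conclude by non-negativity. The only cosmetic differences are that you eliminate $f$ rather than $e$ and you spell out the geometric reason why non-outer-face vertices are interior points of $P$, whereas the paper simply asserts this.
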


\begin{proof}
Since $G$ is $2$-connected the boundary of the outer face is a cycle $C$; say it has $r$ vertices.
$P$ has at least $n-r$ interior points since every internal vertex must be an interior point. 
Let $e$ be the number of edges and $f$ be the number of faces in $G$. Thus $2e = k(f-1)+r$. Euler's formula states that $n-e+f = 2$. 
Thus, $n-r = n- 2e +k(f-1) = n- 2(n+f-2) + k(f-1) = k - n + (f-2)(k-2)$. 
Hence $n-r \equiv_{k-2} k-n$.
Since $n-r$ is non-negative, $P$ contains at least $j$ interior points.
\end{proof}

Note that in the case $k=3$ this proposition gives no information about the point set $P$ because every $P$ admits a triangulation. In the case $k=4$, if $P$ admits a $k$-angulation then if $n$ is even there may be no interior points, but if $n$ is odd there is at least one.

\medskip

The proof of the other direction of Theorem~\ref{mainthm} takes a lot more work, beginning with some more definitions. 
The \emph{dual graph} of a $2$-connected plane graph $G$ is the multigraph with the faces of $G$ as vertex set (including the outer face), and an edge between any two faces for each edge of $G$ that they share. 
Thus, every edge in the primal graph $G$ has a corresponding dual edge in the dual graph, and vice versa.
Also, the dual of a $2$-connected plane graph has no loops.
The \emph{weak dual graph} of $G$ is obtained from the dual graph by deleting the outer face vertex. 

Our general approach for finding $k$-angulations is to construct a triangulation $G$ of $P$ that can be made into a $k$-angulation by removing some edges.
Thus each internal face of the $k$-angulation will be the union of some triangular faces of $G$. 
To determine the edges to be removed, it is most convenient to consider the weak dual $G^*$.
Note that the weak dual of a triangulation does not have parallel edges, since two triangular faces share at most one edge.
We aim to partition $G^*$ into disjoint connected subgraphs that cover all the vertices of $G^*$.
We call these connected subgraphs of $G^*$ \emph{blocks}\footnote{Not to be confused with the common usage of the term `block' meaning a maximal $2$-connected subgraph.} and such a partition a \emph{block partition} of $G^*$.
For any block partition of $G^*$, the \emph{corresponding subgraph} of $G$ is the subgraph formed by removing the edges that correspond (by duality) to the edges of each block.
The \emph{order} of a block, and the order of a graph in general, is the number of vertices it has.


\begin{proposition}\label{2con} Let $G$ be a triangulation of $P$ and suppose that the weak dual $G^*$ is partitioned into blocks of order $k-2$. 
Let $G'$ be the subgraph of $G$ corresponding to this block partition.
If $G'$ is $2$-connected, then it is a $k$-angulation of $P$.
\end{proposition}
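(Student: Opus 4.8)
The plan is to verify the two defining properties of a $k$-angulation for $G'$ separately. Since $2$-connectivity is assumed, I only need to show that (i) restricting the given drawing of $G$ to $G'$ is a straight-line drawing of $G'$ on $P$ with the same outer face as $G$, and (ii) every internal face of $G'$ is bounded by a $k$-cycle. For (i), note that the outer face of $G$ is bounded by $\conv(P)$, and that an edge lies on this boundary exactly when its dual edge is incident to the outer-face vertex; such dual edges are absent from the weak dual $G^*$, hence from every block, so no outer edge of $G$ is deleted. Restricting the drawing of $G$ to $G'$ therefore gives a plane drawing on $P$ whose unbounded region is still bounded by $\conv(P)$, so $G'$ and $G$ have the same outer face; being $2$-connected, $G'$ is a $2$-connected plane graph on $P$ and each of its faces is bounded by a simple cycle.

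The key step is to identify the internal faces of $G'$ with the blocks. Let $S\subseteq E(G)$ be the set of deleted edges, i.e. the primal edges dual to the edges of the blocks. Since the blocks partition $V(G^*)$, each edge of $S$ joins two triangular faces of $G$ that lie in a common block, so the connected components of the graph $(V(G^*),S)$ are exactly the blocks. Two triangular faces of $G$ end up in the same face of $G'$ precisely when they are joined by a path of deleted edges in $G^*$, hence precisely when they lie in the same block. Consequently, each internal face $F$ of $G'$ is exactly the union of the $k-2$ triangular faces of $G$ forming one block $B$, with the deleted edges of $B$ (and no other edges of $G$) lying strictly inside $F$; any edge of $G$ strictly inside $F$ separates two triangles of $B$, so it must have been deleted, so it is dual to an edge of $B$.

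It then remains to compute the length $\ell$ of the bounding cycle $C_F$ of such a face $F$. I would consider the plane graph $H$ consisting of $C_F$ together with all vertices and edges of $G$ lying strictly inside $C_F$; this is the $1$-skeleton of a triangulated disk, hence connected, its bounded faces are the $k-2$ triangles of $B$, its interior edges are the deleted edges of $B$ (say $s$ of them), and its interior vertices are some $p$ points of $P$. Counting edge--face incidences gives $3(k-2)+\ell = 2(\ell+s)$, and Euler's formula for $H$ gives $(\ell+p)-(\ell+s)+\bigl((k-2)+1\bigr)=2$, i.e. $p=s-(k-3)$. But $C_F$ bounds a face of the plane graph $G'$ while $V(G')=P$, so no point of $P$ lies strictly inside $C_F$; hence $p=0$, so $s=k-3$ (equivalently, $B$ is a tree), and then $\ell = 3(k-2)-2(k-3)=k$. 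Thus every internal face of $G'$ is a $k$-gon, and together with $2$-connectivity this shows $G'$ is a $k$-angulation of $P$.

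As for where the work lies: the two counting identities at the end are routine, and the real care is needed in the face--block correspondence — namely, arguing that an internal face of $G'$ comprises exactly the $k-2$ triangles of a single block and contains no other edges of $G$ in its interior, and that distinct blocks cannot be merged into one face. The role of the $2$-connectivity hypothesis is then clean and localised: it is used only to guarantee that face boundaries are simple cycles and, through $V(G')=P$, that no internal face conceals a point of $P$ — which is exactly what forces each block to be a tree and each internal face to have length exactly $k$ rather than more.
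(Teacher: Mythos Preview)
Your proof is correct and follows essentially the same skeleton as the paper's: use $2$-connectivity to conclude that each internal face of $G'$ is bounded by a simple cycle, show that each block must be a tree on $k-2$ vertices (hence has $k-3$ edges), and then count $3(k-2)-2(k-3)=k$ boundary edges per face. You are more explicit than the paper about why the outer face is preserved and why internal faces of $G'$ correspond bijectively to blocks; the paper simply asserts the latter.

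The one genuine difference is how you deduce that each block is a tree. The paper argues directly that a cycle in a block would force the corresponding face boundary to be disconnected (the dual cycle encloses a primal vertex), contradicting $2$-connectivity. You instead set up Euler's formula on the triangulated disk bounded by $C_F$, obtain $p=s-(k-3)$, and use the observation that a face of a $2$-connected plane graph on $P$ cannot hide a point of $P$ to force $p=0$. These are really the same obstruction viewed from opposite sides --- a dual cycle corresponds exactly to an enclosed primal vertex --- so neither argument buys anything the other does not, but your version has the minor advantage of making the role of the hypothesis $V(G')=P$ fully explicit.
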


\begin{proof}
Since $G'$ is $2$-connected, all its faces are bounded by simple cycles. 
It remains to show that the internal faces are bounded by $k$-cycles.
Each internal face of $G'$ corresponds to a block with $k-2$ vertices in $G^*$.
If a block contains a cycle, then the boundary of the corresponding face is not connected.
Therefore, every block is a tree.
Thus the number of edges in every block is $k-3$.
This implies that the number of edges in the boundary of every internal face is $3(k-2) - 2(k-3) = k$.
\end{proof}

Except when $P$ is in convex position, we begin with the \emph{wheel triangulation}, which is constructed as follows.
Choose an interior point $z$ of $P$, insert an edge from $z$ to each point in $P \setminus \{z\}$, then add the cycle that passes through $P\setminus \{z\}$ in radial order about $z$. 
This gives a $3$-connected triangulation of $P$ whose weak dual is a cycle $Z$ (see Figure~\ref{fig:Bad}).
We then add triangles one by one to the outside of the wheel triangulation. In the weak dual, this corresponds to pasting binary trees onto $Z$ by their leaves. 
Next some flipping operations in the primal graph (called building pontoons) are used to connect these trees by paths separate from the (now modified) inner cycle.
The result is the triangulation which is to be partitioned into blocks to yield a $k$-angulation.

The points of $P$ are classified according to their role in the wheel triangulation. Let $C$ be the outer cycle of the wheel triangulation. Then exactly $n-1$ vertices are contained in $C$ and we call the only vertex $z$ that is not in $C$ the \emph{central vertex}. A vertex $v$ in $C$ is called \emph{reflex} if its exterior angle is less than $\pi$. Otherwise, its exterior angle is greater than $\pi$ and $v$ is called \emph{non-reflex}. A non-empty path in $C$ whose vertices are all non-reflex is called a \emph{convex path}. 

For a non-empty path $A$ in $C$, 
 let $p(A)$ be the vertex in $C$ that is the clockwise \emph{predecessor} of $A$ and let $s(A)$ be the vertex in $C$ that is the clockwise \emph{successor} of $A$. Let the \emph{closure} of $A$, denoted $\bar{A}$, be the path in $C$ from $p(A)$ through $A$ to $s(A)$. If $A$ has order $n-2$ or $n-1$ then $\bar{A}$ is the whole cycle $C$. 
A convex path $A \subseteq C$ such that $z \in \conv(\bar{A})$ is called \emph{bad} (see Figure~\ref{fig:Bad}); otherwise it is called \emph{good} (see Figure~\ref{fig:Pontoon1}).


\begin{figure}
	\centering
	\includegraphics[scale=0.5]{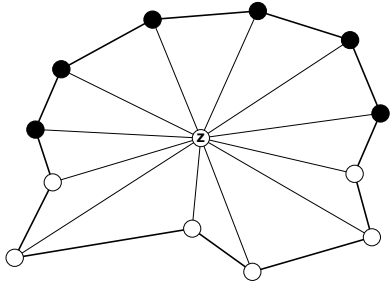}
	\caption{A wheel triangulation. The black vertices induce a maximal bad convex path.}
	\label{fig:Bad}
\end{figure}

\begin{lemma}
There is at most one maximal bad convex path in $C$.
\end{lemma}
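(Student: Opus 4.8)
The plan is to argue by contradiction, supposing that there are two distinct maximal bad convex paths $A$ and $B$ in $C$. The key geometric fact to exploit is that a convex path $A$ is bad exactly when the central vertex $z$ lies in $\conv(\bar A)$, i.e.\ inside the triangle-like region cut off by the chord from $p(A)$ to $s(A)$ on the far side from the rest of $C$. Since $z$ is a single point, I would first establish that the two ``bad'' regions $\conv(\bar A)$ and $\conv(\bar B)$ must overlap (both contain $z$), and then show that this overlap forces the underlying convex paths $A$ and $B$ to interact on the cycle $C$ in a way that contradicts the maximality of at least one of them.

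First I would set up notation: write $A = (a_1,\dots,a_p)$ and $B=(b_1,\dots,b_q)$ as the vertices in clockwise order along $C$, with closures $\bar A$ running from $p(A)$ to $s(A)$ and $\bar B$ from $p(B)$ to $s(B)$. I would dispose of the trivial case where $A$ or $B$ has order $n-1$ or $n-2$ (then $\bar A = C$, $\conv(\bar A) = \conv(P)$, and essentially all of $C$ is a single convex path, forcing $A = B$). In the main case, $\bar A$ and $\bar B$ are proper sub-paths of $C$. I would then consider the two possibilities: either $A$ and $B$ are ``disjoint'' on the cycle in the sense that $\bar A$ and $\bar B$ share at most endpoints, or they genuinely overlap (share an interior vertex or more).

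The overlapping case is where maximality kicks in: if $A$ and $B$ are both convex paths and they share a vertex, then the union of the vertex sets of $A$ and $B$, taken as a sub-path of $C$, is again a convex path (all its vertices are non-reflex), and its closure's convex hull contains both $\conv(\bar A)$ and hence $z$, so it is bad; this contradicts the maximality of $A$ (unless $A = B$). So the real content is the disjoint case: I must rule out two vertex-disjoint (or endpoint-sharing) maximal bad convex paths $A, B$. Here I would argue geometrically. If $\bar A$ and $\bar B$ are disjoint sub-arcs of $C$, then the chord $p(A)s(A)$ and the chord $p(B)s(B)$ are two ``separating'' chords of the convex polygon spanned by $C$: the region $\conv(\bar A)$ lies on the outer side of $p(A)s(A)$ and $\conv(\bar B)$ on the outer side of $p(B)s(B)$, and these two outer regions are disjoint (they lie in complementary parts of $\conv(C)$ cut off by the two chords, since the arcs don't interleave). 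Hence they cannot both contain $z$ — contradiction. The point here is that for a convex polygon, two boundary arcs that do not overlap are separated by a line, so the corresponding ``ears'' $\conv(\bar A)$ and $\conv(\bar B)$ are disjoint.

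The step I expect to be the main obstacle is making the ``disjoint ears are separated'' argument fully rigorous, including the boundary subtleties: the arcs $\bar A$ and $\bar B$ might share an endpoint (e.g.\ $s(A) = p(B)$), in which case $\conv(\bar A)$ and $\conv(\bar B)$ can touch at that vertex but their interiors are still disjoint, and since $z$ is an \emph{interior} point of $P$ (hence not a vertex of $C$ and in particular not on the boundary of either ear), it still cannot lie in both. One must also handle the case where the convex paths $A$ and $B$ are adjacent on $C$ — separated by a single reflex vertex or by a short non-convex stretch — and check this does not secretly allow both ears to swallow $z$; this reduces to the same separating-chord picture. Once the case analysis (overlapping versus disjoint, with careful treatment of shared endpoints and the small-order exceptions) is laid out, each case is a short argument, and together they force $A = B$.
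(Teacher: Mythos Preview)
Your overlapping case is fine and matches the paper's implicit reasoning. The disjoint case, however, has a real gap: you treat $C$ as a convex polygon and speak of the ``chords'' $p(A)s(A)$, $p(B)s(B)$ cutting $\conv(C)$ into complementary ears. But $C$ is only the outer cycle of the wheel triangulation --- the points of $P\setminus\{z\}$ taken in \emph{radial order about $z$} --- and is in general merely star-shaped, not convex. Since $p(A)$ and $s(A)$ are reflex vertices of $C$, they lie in the interior of $\conv(C)$, so $p(A)s(A)$ is not a chord of $\conv(C)$ and does not separate it into two pieces in the way you describe. Worse, for a bad convex path the closure $\bar A$ need not even be in convex position, so there is no reason for $\conv(\bar A)$ to lie on a single side of the segment $p(A)s(A)$. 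Your ``disjoint ears'' picture simply does not hold in this setting.

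The paper's proof sidesteps all of this with an angular argument that exploits exactly the property of $C$ you are not using: because the vertices of $C$ are radially sorted around $z$, any sub-arc $\bar B$ of $C$ occupies a well-defined angular interval at $z$, and $z\in\conv(\bar B)$ forces that interval to exceed $\pi$. Two maximal bad convex paths are disjoint (by maximality, as you argue), hence their closures occupy disjoint angular intervals, and two such intervals cannot both exceed $\pi$ out of a total of $2\pi$. That is the whole proof. Your separating-line idea can be rescued, but only by passing through this angular observation, at which point the ear language becomes superfluous.
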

\begin{proof}
Let $B$ be an inclusion maximal bad convex path in $C$. By definition, $z$ is contained strictly inside the convex hull of $\bar{B}$. Thus, $\bar{B}$ covers an angle of more than $\pi$ at $z$. Since $B$ is maximal convex, any other maximal bad convex path must be disjoint from $\bar{B}$, but this would exceed the total angle $2\pi$ at $z$.
\end{proof}

Note that for a good convex path $A$, the vertices $p(A)$ and $s(A)$ must be distinct, and $C\setminus \bar{A}$ must contain at least one vertex since $z \not\in \conv(\bar{A})$. Moreover, the points in $\bar{A} \cup \{z\}$ are in convex position (see Figure~\ref{fig:Pontoon1}). 
For any good convex path $A$, we now define a special retriangulation of $\conv(\bar{A} \cup \{z\})$ called a \emph{pontoon} over $A$.
First delete the edges of the initial wheel triangulation that lie in the interior of $\conv(\bar{A} \cup \{z\})$. 
Then add all edges from $s(A)$ to other vertices in $\bar{A} \cup \{z\}$ (see Figure~\ref{fig:Pontoon2}). 
The \emph{order} 
of a pontoon is the order of the path over which it is built.
Note that a pontoon cannot be built over a bad convex path.
The reason for the name of this construction will become apparent in the proof of Theorem~\ref{mainthm}, where it is used to connect triangles added outside of $C$ by a structure similar to a bridge or pontoon in the weak dual.

\begin{figure}
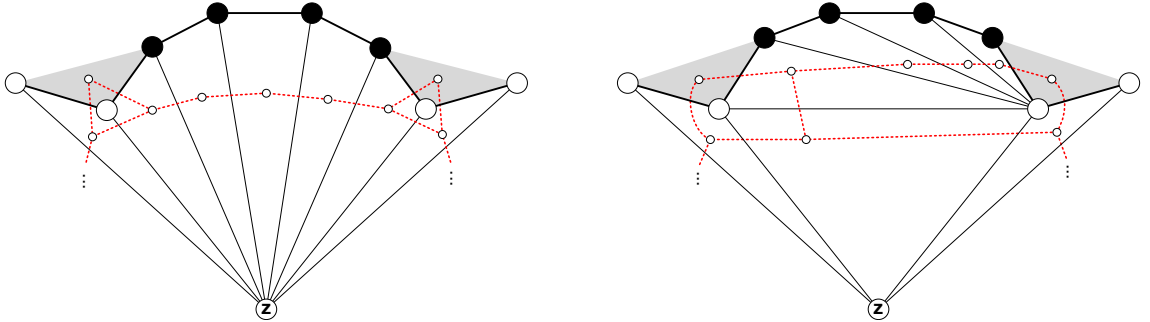

	\centering
	\subfigure[A good convex path $A$, depicted with black vertices. The grey triangles will be added to the triangulation to find a $k$-angulation later.]{
			\includegraphics[scale=0.6]{./figures/Pontoon1}
			\label{fig:Pontoon1}
	}
	\quad\quad
	\subfigure[A pontoon is built over $A$ by retriangulating $\conv(\bar{A} \cup \{z\})$.  The weak dual is depicted with dotted edges.]{
			\includegraphics[scale=0.6]{./figures/Pontoon2}
			\label{fig:Pontoon2}
	}
	\caption{Building Pontoons}
	\label{fig:Pontoons}
\end{figure}


\section{Proof}

We are now ready to begin the proof of the other direction of Theorem~\ref{mainthm}, namely:


\begin{theorem}
Let $n \geq 2k^2$ and $j \equiv_{k-2} k-n$ with $0 \leq j \leq k-3$.
Every set $P$ of $n$ points in general position in the plane with at least $j$ interior points admits a $k$-angulation.
\end{theorem}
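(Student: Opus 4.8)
The plan is to reduce the construction of a $k$-angulation to the combinatorial task set up in the preliminaries: build a triangulation $G$ of $P$ whose weak dual $G^*$ admits a block partition into trees of order $k-2$, such that the corresponding subgraph $G'$ is $2$-connected. By Proposition~\ref{2con} this suffices. We split into two cases according to whether $P$ is in convex position. If $P$ is in convex position, then by Proposition~\ref{lem:interiorpoints} we need $j=0$, i.e.\ $n\equiv_{k-2} k$, so $n-2$ is divisible by $k-2$; here the weak dual of any triangulation of the convex polygon is a tree (indeed a path, for a fan triangulation) on $n-2$ vertices, and we must partition this tree into connected subtrees of order $k-2$. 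I would pick a fan triangulation so that $G^*$ is a path on $n-2=(k-2)m$ vertices, cut it into $m$ consecutive subpaths of length $k-2$, and then verify directly that the resulting $G'$ is $2$-connected (each internal $k$-face shares the apex and one polygon edge with its neighbours, so no cut vertices arise). The bulk of the argument is the non-convex case.

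For the non-convex case, I would start from the \emph{wheel triangulation} about a chosen interior point $z$, whose weak dual is a cycle $Z$ on $n-1$ vertices. The first goal is to handle the ``interior point budget'': we are allowed to put up to all $j$ of the guaranteed interior points to work, but more importantly we must make the vertex count of $G^*$ congruent to $0$ modulo $k-2$ after all modifications, since each block has order exactly $k-2$. I would then add triangles one at a time to the outside of the outer cycle $C$. In the weak dual each such addition grows $Z$ by pasting a new leaf (more generally, repeatedly adding ears over a convex path pastes a binary tree onto $Z$ by its leaves), so by adding triangles over the good convex paths of $C$ we can attach binary trees of prescribed sizes to $Z$. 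Choosing these sizes lets us adjust the total order of the current weak dual to the required residue, using that $n\ge 2k^2$ gives enough room (roughly, enough non-reflex vertices lying on good convex paths) to attach trees totalling up to about $k^2$ extra vertices — this is where the $2k^2$ bound is consumed.

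Next comes the \emph{pontoon} step, whose point is connectivity of the final $G'$. After pasting binary trees onto $Z$, the weak dual is a cycle $Z$ with trees hanging off it; a block partition that simply chops this up can easily create cut vertices in $G'$ (for instance if some block, as a subtree of $G^*$, corresponds to a set of triangles whose union separates $P$). The pontoons are the device that fixes this: for each good convex path $A$ used, retriangulating $\conv(\bar A\cup\{z\})$ by fanning from $s(A)$ replaces a stretch of the inner wheel by a fan, and in the weak dual this carves out a path that runs ``underneath'' the attached tree, parallel to but separate from the (modified) inner cycle — a bridge, hence the name. After all pontoons are built I would have a triangulation whose weak dual consists of a central cycle together with, for each pontoon, a ``bypass'' path plus the binary tree it carries, arranged so that the whole thing is still essentially a cycle with trees attached but now with enough slack that a careful greedy partition into order-$(k-2)$ subtrees keeps $G'$ $2$-connected. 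The block partition itself I would construct by walking around the central cycle, greedily forming blocks of $k-2$ consecutive dual vertices while absorbing the hanging trees, and arranging the cuts so that consecutive $k$-faces always share an edge and every vertex of $P$ retains degree at least $2$ with no separating vertex; the divisibility arranged in the previous step guarantees the walk closes up exactly.

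The main obstacle I expect is precisely this last coordination: simultaneously (i) hitting the exact residue $0 \bmod (k-2)$ for $|V(G^*)|$ using only the moves available (adding exterior triangles, building pontoons), (ii) keeping the block partition's pieces small enough (order exactly $k-2$) while (iii) guaranteeing $2$-connectivity of $G'$, i.e.\ ruling out cut vertices and bridges in the primal. Each move changes the count and the dual structure in a constrained way, so one must show the arithmetic can always be met within the $2k^2$ budget \emph{and} that a valid (connectivity-preserving) partition of the resulting dual exists; verifying $2$-connectivity of $G'$ will likely require a local analysis of what each block looks like in the primal (a "fan-like" union of triangles) and a global argument that the blocks glue along the central cycle without pinching. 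I would isolate the connectivity verification into its own lemma, proving that a union of triangles dual to a subtree of the weak dual that is ``laminar'' with respect to the cycle/pontoon structure is bounded by a simple cycle and attaches to its neighbours along single edges.
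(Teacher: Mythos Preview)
Your architecture matches the paper's (wheel triangulation, add outer triangles, build pontoons, partition the weak dual into $(k-2)$-blocks, invoke Proposition~\ref{2con}), but several of the concrete mechanisms are either missing or misidentified, and one case is genuinely not covered. First, your convex/non-convex split does not handle $j=0$ when $P$ is \emph{not} in convex position but has very few interior points: after spending the unique interior point as $z$, the outer cycle $C$ may have no reflex vertices at all, so you cannot add the $k-3$ exterior triangles your residue calculation requires. The paper sidesteps this entirely by invoking the Gritzmann--Mohar--Pach--Pollack outerplanar embedding theorem for $j=0$, which works on any point set. Second, your description of pontoons is inverted: triangles are added at \emph{reflex} vertices (not over convex paths), and pontoons are built over the \emph{convex gaps between} clusters of added triangles, so as to connect the dual trees into a single outer path $J$ disjoint from the shrunken inner cycle $Z'$; they are not bypasses running under the trees. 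Third, the number of added triangles is only $m=j-1\le k-4$, not ``about $k^2$''; the $n\ge 2k^2$ hypothesis is consumed elsewhere, namely to guarantee that the longest gap $U$ in $C\setminus S$ has order $>2k$, which is what lets you adjust the order of an auxiliary pontoon ($R$, and in some cases $L$) so that block boundaries avoid the interiors of the trees and the last block closes up.

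The part you flag as the main obstacle is indeed where the work lies, and a ``greedy walk'' is not enough. The paper resolves it with a four-way case split (depending on whether all added triangles share an edge with $C$, and whether $U\subseteq A$), and in each case exhibits an explicit Hamiltonian path (or a pair of paths) in $G^*$ --- a ``spiral'' that runs along $J$ and then around $Z'$ --- which it then chops into consecutive $(k-2)$-segments. The $2$-connectivity of $G'$ is not verified by a local laminar-block lemma but by exhibiting a spanning $\theta$-subgraph of $G'$: the outer cycle $C'$ together with a path through $z$ and some surviving spokes. Your proposed laminar lemma might be made to work, but as stated it does not rule out the global failure mode you yourself mention (a block whose union of triangles separates $P$); the explicit $\theta$-graph is what actually certifies this.
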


\begin{proof}

If $j=0$, then $n \equiv_{k-2} k$. 
There exists an outerplanar $k$-angulation with $f$ faces for every $f$, and the number of vertices is $k + (f-2)(k-2) \equiv_{k-2} k$. 
Therefore, using the result of Gritzmann et al.~\cite{gritzmohar} mentioned above, a $k$-angulation can be drawn on $P$.

If $j > 0$ then there is at least one interior point. 
Select an arbitrary interior point $z$ and construct the wheel triangulation with $z$ as the central vertex.
If $j=1$ then the number of triangles in the wheel triangulation is $n-1 \equiv_{k-2} k-j-1 = k-2$. 
The weak dual is a cycle on $n-1 = c(k-2)$ vertices, where $c\geq 2$ since $n \geq 2k^2$. 
This cycle can be partitioned into $c$ blocks, each of which is a path of order $k-2$.
The subgraph of the wheel triangulation corresponding to this block partition is a subdivision of a wheel graph with $c$ spokes, and is therefore $2$-connected.
Hence, by Proposition~\ref{2con}, it is a $k$-angulation.

Now assume $2 \leq j \leq k-3$. 
%
We start by using Algorithm~\ref{alg:add} to add $m:=j-1$ triangles to the wheel triangulation. 
Note that $m$ is the number of triangles we must add to the $n-1$ triangles of the wheel triangulation to make the total number of triangles a multiple of $k-2$. 
As triangles are added, the current outer cycle is denoted $C'$. 
Reflex and non-reflex vertices on $C'$ are defined analogously to those on $C$.
Each time a triangle is added, a reflex vertex of $C'$ becomes an internal vertex. 
In this way, we may identify each triangle with a unique vertex, and each triangulation (which extends the wheel triangulation) with a sequence of vertices. 

Algorithm~\ref{alg:add} is initialised at a non-reflex vertex on $C':= C$. 
Moreover, if there is a maximal bad convex path $B$, it is initialised at a non-reflex vertex inside $B$. 
It then proceeds clockwise around $C'$ (see Figure~\ref{fig:AddingTriangles}). 
Each time it encounters a reflex vertex of $C'$, it 
adds a triangle there, proceeds to the next vertex clockwise around $C'$, and then updates $C'$.
Algorithm~\ref{alg:add} stops when it has added $m$ triangles. The output is the sequence $S:=(s_1,\dots,s_m)$ of vertices in $C$ where triangles were added.

Algorithm~\ref{alg:add} may complete many laps around $C'$.
If at some time no more triangles can be added, then $C'$ must be a convex polygon. 
In this case, all interior points of $P$ have been removed from $C'$, so at least $m$ triangles have been added.
Therefore Algorithm~\ref{alg:add} always terminates.

\begin{algorithm}
\caption{Adds $m$ triangles to a wheel triangulation $G$ of $P$}\label{alg:add}
\begin{algorithmic}[1]
\STATE $t \gets 1$; $C' \gets C$

\IF {there exists a bad convex path $B$} \STATE select a vertex $v \in B$ 
	\ELSE \STATE $v \gets$ any non-reflex vertex 
\ENDIF

\WHILE {$t \leq m$} 
   \STATE let $u$ be the vertex anticlockwise from $v$ on $C'$
   \STATE let $w$ be the vertex clockwise from $v$ on $C'$
	\IF {$v$ is reflex in $C'$}
		\STATE add $uw$ to $G$
		\STATE $C' \gets (C' \setminus \{uv, vw\} ) \cup  \{ uw \}$
		\STATE $s_t \gets v$
		\STATE $t \gets t+1$
	\ENDIF
   \STATE $v \gets w$
\ENDWHILE
\RETURN $(s_1,\dots,s_m)$
\end{algorithmic}
\end{algorithm}

\begin{figure}
	\centering
	\includegraphics[scale=0.65]{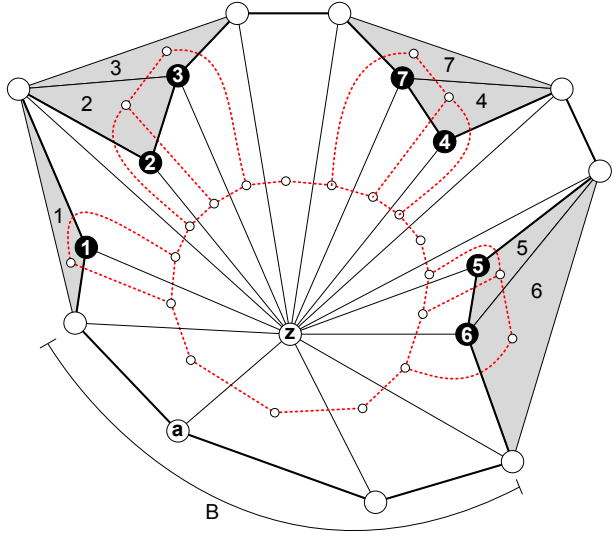}
	\caption{Algorithm~\ref{alg:add} starts at $a$ inside the bad path $B$ and adds the grey triangles to the wheel triangulation in the given order. The black vertices depict $S$. Dashed edges depict the weak dual graph with four trees pasted onto $Z$.}
	\label{fig:AddingTriangles}
\end{figure}

Let $G_t$ be the triangulation and $G^*_t$ the weak dual after Algorithm~\ref{alg:add} has added $t$ triangles.

\begin{claim} $G^*_t - E(Z)$ is a complete binary forest with all leaves and isolated vertices in $V(Z)$. Furthermore, for each component tree $T$ in this forest, the leaves of $T$ are consecutive in $Z$.
\end{claim}

\begin{proof} For the first claim do induction on $t$. 
For $t=0$, $G^*_0 - E(Z) = Z- E(Z)$ is a set of isolated vertices.
Now suppose $G^*_t - E(Z)$ is a complete binary forest with all leaves and isolated vertices in $V(Z)$.
Complete binary trees have a unique root which has degree 2 or 0, all other vertices have degree 3 or 1.
In the primal graph, each new triangle is added at a reflex vertex, using two existing edges and adding one new edge.
This corresponds to connecting a new degree 2 vertex to two vertices $v$ and $w$ of degree 2 in $G^*_t$. (There are no degree $1$ vertices in $G^*_t$, and degree $3$ vertices correspond to triangles that do not share an edge with $C'$). 
Removing the edges of $Z$ reduces the degree of each vertex by 0 or 2.
Hence $v$ and $w$ have degree 0 or 2 in $G^*_t - E(Z)$, and are therefore roots. 
Since each tree has one root, each vertex addition joins two separate trees, giving them a new root.
A new leaf is created only if one of these trees was an isolated vertex.
Thus $G^*_{t+1} - E(Z)$ is also a complete binary forest with leaves and isolated vertices in $V(Z)$.

To see that the leaves of a given tree $T$ are consecutive in $Z$, suppose they are not. 
Then there is a vertex $v$ of $Z$ that is not a leaf of $T$ between two leaves $l_1$ and $l_2$ of $T$.
There is a cycle $\tilde{Z}$ in $G^*_t$ that passes from $l_1$ to $l_2$ through $T$ and back to $l_1$ through $Z$ and contains $v$ in its interior. 
Since the trees cover all vertices of $G^*_t$, $v$ is in a different tree $T'$.
The triangle corresponding to the root of $T'$ has an edge in the outer face, but $T'$ is separated from the outer face by $\tilde{Z}$.
\end{proof}

Recall that $S$ is the set of vertices in $C$ on which Algorithm~\ref{alg:add} added triangles. Consider the collection of paths induced by $S$ in $C$. 
Each such path contains a reflex vertex of $C$, otherwise it would be a convex path and no triangle could be added on it.
Therefore, for every convex path $X$, $X \setminus S$ is a single path.

Let $L$ be the subgraph induced in $C$ by the set of vertices that are visited by Algorithm~\ref{alg:add}. $L$ is either a path or all of $C$.
An original reflex vertex of $C$ remains a reflex vertex of $C'$, unless it is added to $S$.
So Algorithm~\ref{alg:add} never passes over a reflex vertex of $C$ without adding it to $S$. 
Therefore, even if $S$ does not contain all the reflex vertices of $C$, there must exist a path in $L$ that 

\emph{(1)} contains every vertex in $S$, 

\emph{(2)} contains no reflex vertices in $C \setminus S$ and 

\emph{(3)} does not intersect the path $B \setminus S$ (if a maximal bad convex path $B$ exists). 

\noindent Since Algorithm~\ref{alg:add} begins inside $B$, $L$ is either a cycle, or does not cover all of $B$. Hence \emph{(3)} is satisfiable.
Let $A$ be the shortest path with these three properties.
By \emph{(2)} and \emph{(3)}, the components of $A \setminus S$ are good convex paths, so it is possible to construct pontoons over them.
Since $A$ is shortest possible, the end points of $A$ are in $S$.

Let $U$ be the longest path in $C \setminus S$. 
Thus $$|U| \geq \frac{2k^2-1-m}{m} \geq \frac{2k^2-1}{k-2} -1 > 2k.$$ 
We distinguish the following cases.

\begin{figure}
	\centering
	\includegraphics[scale=0.65]{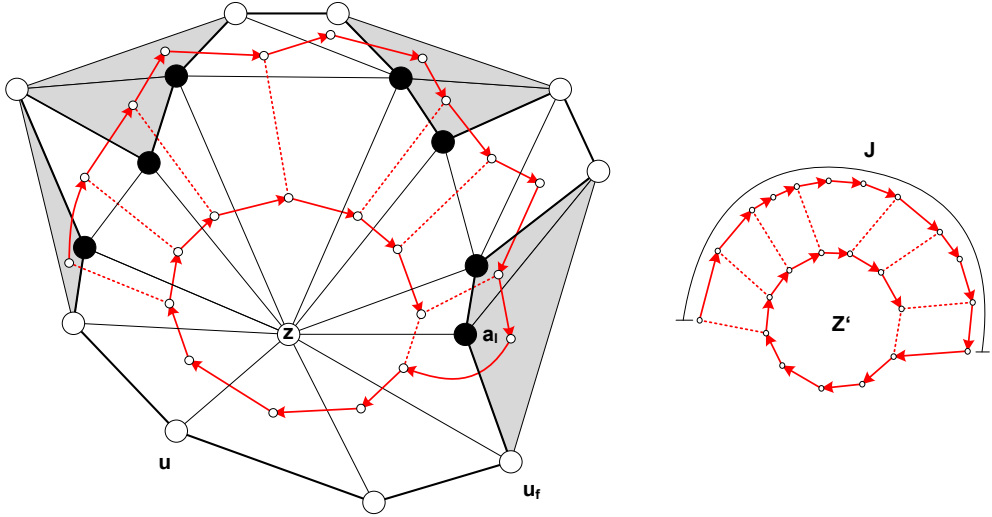}
	\caption{The graph of Figure~\ref{fig:AddingTriangles} after building all pontoons in Case~$1(a)$ (and a schematic drawing depicting $J$ and $Z'$). The thick edges in the weak dual depict the Hamiltonian path $H$ that is partitioned into blocks.}
	\label{fig:AddingTriangles2}
\end{figure}

\medskip
\emph{Case 1: All triangles added by Algorithm~\ref{alg:add} share an edge with $C$.}
\medskip

\noindent In this case, when Algorithm~\ref{alg:add} terminates, the weak dual consists of the cycle $Z$ along with $m$ vertices adjacent to $Z$ corresponding to the added triangles. 
These $m$ vertices induce a collection of paths.

\medskip
\noindent \emph{Case 1(a):} First suppose $U$ is not contained in $A$. 
Build a pontoon over each component of $A \setminus S$. 
Let $G$ be the triangulation after the pontoons are built, and let $G^*$ be its weak dual.
When building the pontoons, the cycle $Z$ shrinks to a smaller cycle called the \emph{inner cycle} $Z'$ in $G^*$.
The triangles of the pontoons connect the $m$ added triangles, so $G^* - Z'$ is a path $J$ 
(see Figure~\ref{fig:AddingTriangles2}). 
$J$ starts and ends at the vertices corresponding to the triangles added at the first and last vertices of $A$ respectively.

$G^*$ contains a Hamiltonian path $H$ that begins at the start of $J$, then traverses $J$ and passes from the last vertex of $J$ to the inner cycle $Z'$, visiting the rest of the vertices in clockwise order.
We call this a \emph{spiral path}.
Since $|G^*|= n-1+m \equiv_{k-2} 0$, the path $H$ can be partitioned into blocks consisting of paths each with $k-2$ vertices.

We now show that the subgraph of $G$ corresponding to this block partition of $G^*$ is $2$-connected, and hence a $k$-angulation of $P$ by Proposition~\ref{2con}. 
In the wheel triangulation, all vertices in the outer cycle $C$ are neighbours of $z$.
In the final triangulation $G$ there are two large cycles, the new outer cycle $C'$, and the cycle through the neighbours of $z$, call it $C_z$.
Within $A$, the addition of triangles over the vertices of $S$ removes them from $C'$, while pontoons are built over $A \setminus S$, removing these vertices from $C_z$.
The vertices in $P \setminus (A \cup \{z\})$ make up%
\footnote{Since the endpoints of $A$ are in $S$, $U$ is maximal in $C \setminus S$, and $U$ is not contained in $A$, it follows that $U$ is the complement of $A \cup \{z\}$.} %
$U$ and remain in both $C'$ and $C_z$.
Hence these two cycles intersect in the path $U$, which has order at least $2k$.

Now consider the subgraph $G'$ of $G$ corresponding to the block partition described above.
Since $U$ has order at least $2k$, there is at least one edge left from $z$ to a vertex $u$ in $U$.
There is also an edge from $z$ to the clockwise last point $a_l$ in $A$, where the last block in $H$ ends.
All edges of the cycles $C'$ and $C_z$ remain except for possibly the edge from $a_l$ to the first vertex $u_f$ in $U$.
Therefore, there is a $2$-connected spanning subgraph of $G'$ consisting of the union of $C'$, $C_z - a_lu_f$ and the path $a_lzu$.
This subgraph is a $\theta$-graph (that is, a graph consisting of two vertices joined by three internally disjoint paths).

\medskip

\noindent \emph{Case 1(b):} Now suppose $U$ is contained in $A$.
Then $U$ is a good convex path of order at least $2k$.
Building pontoons over all of $A \setminus S$ (including $U$) may remove too many vertices from $Z$, causing the spiral path method used in the previous case to fail to create a $2$-connected graph. 
Instead, first build pontoons over the components of $A \setminus (S \cup U)$. 
Then build a 
pontoon $R$ (the order of which will be determined later) over at most half of $U$ at the clockwise extreme.

The weak dual graph $G^*$ now consists of the inner cycle $Z'$, and the remaining outer section $J:= G^* - Z'$.
As before, the triangles of the pontoons form paths connecting the paths induced by the $m$ added triangles.
However this time $J$ consists of two separate paths because the pontoon $R$ does not cover all of $U$ and $C \setminus A$ is not empty. 
Let $J_1$ be the path containing $R$, and let $J_2$ be the second path.

\begin{figure}
	\centering
	\includegraphics[scale=0.65]{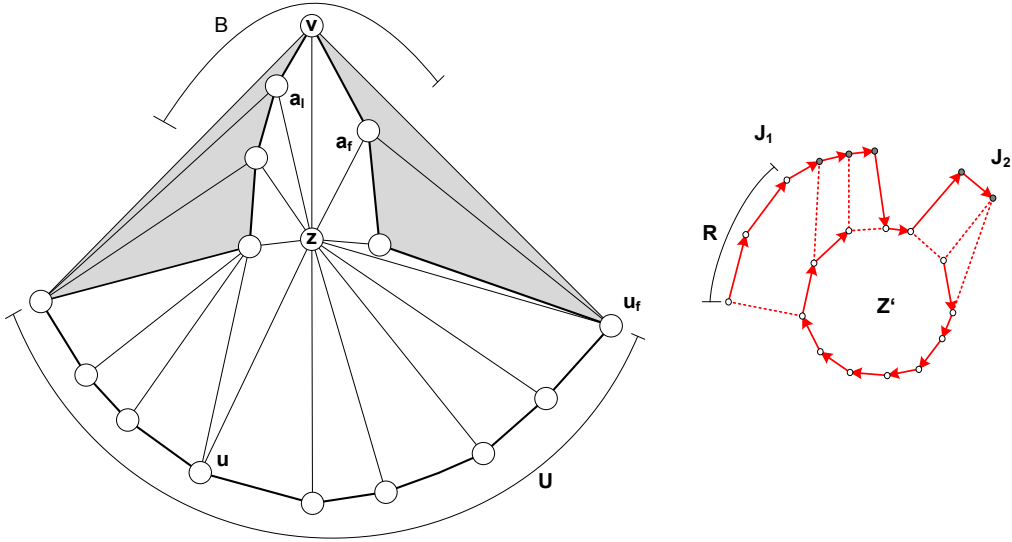}
	\caption{Case 1(b). The only vertex in $C\setminus A$ is $v$. The thick edges in the weak dual show the paths that are partitioned into blocks.}
	\label{fig:AddingTriangles4}
\end{figure}

We now describe two disjoint paths that together cover all the vertices of $G^*$.
The first path $H_1$ begins at the clockwise first vertex in $R$, traverses $J_1$ then passes to $Z'$, continues clockwise until it can pass to the clockwise first vertex of $J_2$ and then continues to the end of $J_2$.
Note that augmenting the pontoon $R$ by one triangle moves a vertex from $Z'$ to $J_1$ and therefore enlarges $H_1$ by one vertex. 
Since $|U| > 2k$, we can choose the order of $R$ such that the number of vertices in $H_1$ is a multiple of $k-2$ and still leave at least $k$ vertices in $Z'$. 
The remaining vertices form a second path $H_2$ that is contained in $Z'$. Since the total number of vertices in $G^*$ is a multiple of $k-2$, the order of $H_2$ must also be a multiple of $k-2$. Thus, both paths can be partitioned into blocks.

We again show that the subgraph $G'$ of $G$ corresponding to this block partition of $G^*$ is $2$-connected, and hence a $k$-angulation of $P$ by Proposition~\ref{2con}. 
The outer cycle $C'$ is a simple cycle.
The internal vertices all lie on the boundary between the blocks of $H_1$ and the blocks of $H_2$.
This boundary forms a path $D$ starting at the clockwise last vertex $u$ of $U$ that is not covered by the pontoon $R$, then proceeding clockwise through the neighbours of $z$ to the last vertex $a_l$ of $A$, then passing to $z$, then to the first vertex $a_f$ of $A$, and then proceeding clockwise through the neighbours of $z$ until the first vertex $u_f$ of $U$.
Thus $C' \cup D$ forms a spanning subgraph of $G'$ which is again a $\theta$-graph.

\medskip
\emph{Case 2: Some triangle added by Algorithm~\ref{alg:add} shares no edge with $C$.}
\medskip

\noindent Suppose a triangle added at a vertex $v \in C$ shares no edge with $C$. 
Then both the predecessor and successor of $v$ in $C$ must have had triangles added to them. 
This implies that Algorithm~\ref{alg:add} has visited every vertex of $C$, and so all reflex vertices of $C$ are in $S$. 
Therefore, all components of $C \setminus S$ (including $U$) are convex paths, one of which is not contained in $A$. 
By Claim 1 above, after Algorithm~\ref{alg:add} has terminated, $G_m^* - E(Z)$ is a collection of binary trees $T_1', \dots , T_{l'}'$ and the leaves of each tree are consecutive vertices on $Z$. 
Let $T_1, \dots , T_l$ be these trees minus their leaves, that is, the components of $G_m^* - V(Z)$.

Before giving the full details, we sketch the remainder of the proof.
We will begin by building pontoons between the trees $\{T_i\}$. 
Our goal is to partition the trees and pontoons into blocks separately from the inner cycle. 
Difficulties may arise from the fact that (unlike the paths in Case $1$) trees cannot in general be split into two components of arbitrary size. 
To avoid this problem, we will require that each tree is contained in a block (recall that no tree is larger than a block). 
This is possible if there are enough choices for where to start the first block.
To this end, we build an additional pontoon at the start of the outer section of the weak dual. The order of this pontoon is chosen so that no block starts inside a tree.

We now give a detailed account of this method.
\medskip

\noindent \emph{Case 2(a):} First suppose $U$ is not contained in $A$. 
As in Case~$1(a)$, build a pontoon over each component of $A \setminus S$. 
Even if $U$ is bad, it is possible to build a pontoon over at least half of its vertices, as either the first half or the last half must make an angle of less than $\pi$ at $z$, and hence form a good convex path. 
Since $|U| > 2k$, without loss of generality, it is possible to build a 
pontoon $R$ of order between $0$ and $k-3$ at the clockwise extreme of $U$. 
The precise order of $R$ will be determined later.
Let $G$ be the resulting triangulation once all pontoons are built.

The weak dual $G^*$ now consists of two sections, the inner cycle $Z'$ and the connected subgraph $J:= G^* - Z'$ consisting of the trees and the pontoons (see Figure~\ref{fig:AddingTriangles5}).
The vertices of the trees have an ordering given by the clockwise order of their corresponding vertices in $A$, and the vertices of the pontoons have a clockwise ordering in $J$.
Let $\sigma$ be a linear ordering of the vertices of $J$ that is consistent with these orderings, 
and where the intervals corresponding to each tree and pontoon are concatenated in the order they appear in $J$. 
That is, $\sigma$ begins with the vertices of $R$, then continues with the vertices of the first tree, then the next pontoon, then the second tree, and so on. 

\begin{figure}
	\centering
	\includegraphics[scale=0.65]{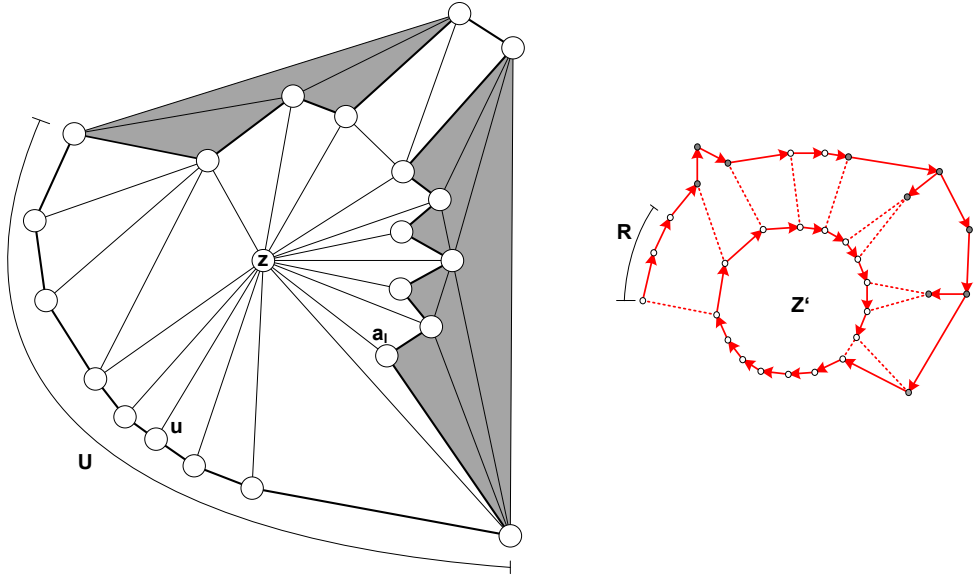}
	\caption{Case~2(a). $U$ is not contained in $A$. The partition is done in a spiral manner. $R$ prevents blocks from starting inside the trees.}
	\label{fig:AddingTriangles5}
\end{figure}

The partition of $G^*$ into blocks is done in a spiral manner similar to Case~$1(a)$, starting with $J$ then passing to $Z'$. $J$ is partitioned according to $\sigma$, taking the first $k-2$ vertices, then the next $k-2$ and so on.
The order of the initial pontoon $R$ may be between $0$ and $k-3$. 
Adjusting the order of $R$ shifts the blocks in $\sigma$, and each order determines a set of block starting points. These sets partition $\sigma \setminus R$. 
A block must not start inside a tree (though it may start at the first vertex of a tree), so there are less than $m$ places in $\sigma$ where a block may not start. 
Since $m < k-2$, there must be some set of block starting points that avoids these obstructions. 
This implies that there exists a feasible order for $R$ such that all trees lie inside a block. 

Just as in Case~$1(a)$, we now show that there is a $2$-connected spanning subgraph of the graph $G'$ corresponding to this block partition. 
Again there are two cycles $C'$ and $C_z$, but in this case their intersection is not the whole of $U$, but rather just the part of $U$ not covered by $R$. 
Nevertheless, this remainder has order greater than $k$, so there is again an edge left from $z$ to a vertex $u$ inside it.
There is also an edge from $z$ to $a_l$, the clockwise last vertex of $A$.
All edges of $C'$, and all but one of $C_z$ remain, and again there is a spanning $\theta$-subgraph consisting of these edges and the path $a_lzu$.
Hence, by Proposition~\ref{2con}, $G'$ is a $k$-angulation.

\medskip

\noindent \emph{Case 2(b):} Now suppose $U$ is contained in $A$. 
The goal is a partition of the weak dual into blocks similar to that in Case~$1(b)$.
First build pontoons over all components of $A \setminus (S \cup U)$. 
Next, at the clockwise extreme of $U$, build a 
pontoon $R$ of order at most $k-3$ which will play a similar role as in Case~$2(a)$, ensuring that blocks do not end inside the trees. 
Finally, at the counter-clockwise extreme of $U$, build a 
pontoon $L$ of order at most $k-3$ which will play a similar role as in Case~$1(b)$, ensuring that the last block of the outer section has exactly $k-2$ vertices. 
Now $G^*$ consists of the inner cycle $Z'$, the trees, and the pontoons (see Figure~\ref{fig:AddingTriangles6}). 
As in Case~$1(b)$, the outer section $J:= G^* -Z'$ has two components, $J_1$ containing $R$, and $J_2$ containing $L$.
As in Case~$2(a)$, define a linear ordering $\sigma$ of the vertices of $J$ starting at the first vertex of $R$.
This time $\sigma$ consists of two intervals, $\sigma_1$ covering $J_1$, and $\sigma_2$ covering $J_2$.

\begin{figure}
	\centering
	\includegraphics[scale=0.65]{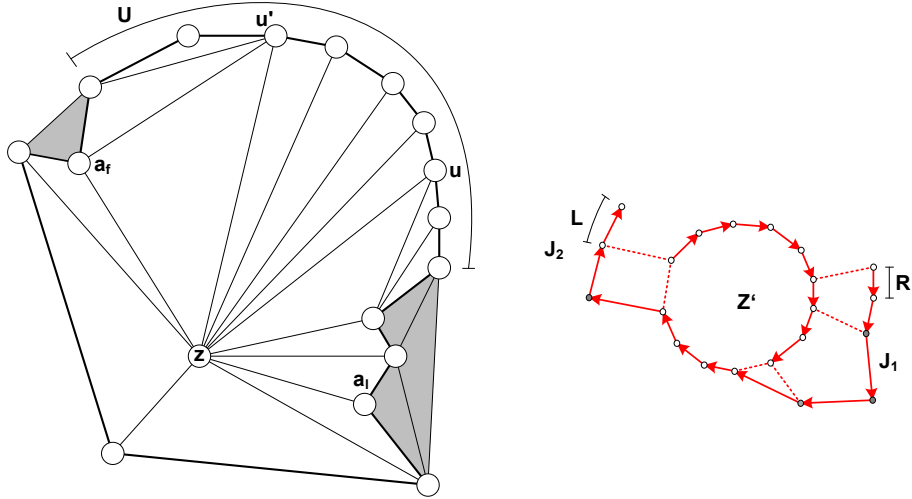}
	\caption{Case~2(b). $U$ is contained in $A$. The partition is done in two sections. $R$ prevents blocks from starting inside the trees, then $L$ finishes a block.}
	\label{fig:AddingTriangles6}
\end{figure}

The partition of $G^*$ into blocks has two sections (similar to the two paths in Case~$1(b)$). 
The first section starts at the first vertex of $R$, traverses $J_1$ according to the ordering $\sigma_1$, then passes to $Z'$, continuing clockwise until it can pass to the first vertex of $J_2$, then continues according to the ordering $\sigma_2$ to the last vertex of $L$. 
The order of $R$ is chosen so that no block ends inside a tree. 
Then the order of $L$ is chosen so that the total number of vertices in this section is divisible by $k-2$.
The second section of the partition is the remainder of $Z'$.
Since $|U| > 2k$, and building each of $L$ and $R$ removes at most $k-3$ vertices, there are at least $6$ vertices of $U$ not covered by $L$ and $R$. 
This implies that the remainder of $Z'$ is non-empty, and thus contains at least one block.

Just as in Case~$1(b)$, we now show that there is a $2$-connected spanning subgraph of the graph $G'$ corresponding to this block partition.
As before, it consists of the outer cycle $C'$ and a path $D$ covering the internal vertices.
$D$ starts at the clockwise last vertex $u$ of $U$ that is not covered by the pontoon $R$, then proceeds clockwise through the neighbours of $z$ to the last vertex $a_l$ of $A$, then passes to $z$, then to the first vertex $a_f$ of $A$, and then proceeds clockwise through the neighbours of $z$ until the first vertex $u'$ of $U$ that is not covered by the pontoon $L$.
Thus $C' \cup D$ forms a spanning $\theta$-subgraph of $G'$, so $G'$ is a $k$-angulation by Proposition~\ref{2con}. 
This concludes the proof of Theorem~\ref{mainthm}.
\end{proof}

\section{Remarks}

We conjecture that Theorem~\ref{mainthm} can be extended to hold for all $n \geq k$.

\begin{conjecture}
Let $n \geq k$ and $j \equiv_{k-2} k-n$ with $0 \leq j \leq k-3$.
A set $P$ of $n$ points in general position in the plane
admits a $k$-angulation if and only if it has at least $j$ interior points.
\end{conjecture}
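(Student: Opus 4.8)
The plan is to separate the two directions and reuse the established machinery as far as possible. The ``only if'' direction already holds for every $n \geq k$: Proposition~\ref{lem:interiorpoints} was proven with no lower bound on $n$ beyond $n \geq k$, and it shows that a $k$-angulation on $P$ forces at least $j$ interior points. Hence the whole content of the conjecture is the ``if'' direction in the range $k \leq n < 2k^2$, since Theorem~\ref{mainthm} already settles $n \geq 2k^2$.

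First I would isolate exactly where the hypothesis $n \geq 2k^2$ is spent. In the ``if'' proof it is used only to produce the long shared path $U$ with $|U| > 2k$, which in turn feeds the spanning $\theta$-subgraph certifying $2$-connectivity (and, in the case $j=1$, the much weaker bound $c \geq 2$). My first step is to tighten this: track how many spokes $zu$ with $u \in U$ are consumed by the blocks crossing $U$, and show that a single surviving spoke in the interior of $U$ already yields the $\theta$-subgraph. Since blocks span $k-2$ consecutive rim triangles, a surviving interior spoke is guaranteed once $U$ straddles a block boundary, so this should replace $|U| > 2k$ by roughly $|U| \geq k-2$. But because $|U| \geq (n-1-m)/m$ with $m = j-1$, even this only yields a bound of order $k^2$: when $j$ is close to $k-3$ (so $m \approx k-4$) and $n$ is close to $k$, the path $U$ is unavoidably short and the entire ``longest path'' strategy degenerates. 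Improving constants therefore cannot reach $n \geq k$, and a genuinely different argument is needed in the tight regime.

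For that tight regime I would try strong induction on $n$ (for fixed $k$), using the following reduction. Locate an \emph{empty} convex $k$-gon $p_1\cdots p_k$ forming an ear at the boundary, delete its $k-2$ interior vertices $p_2,\dots,p_{k-1}$, and replace the boundary arc by the chord $p_1p_k$, obtaining $P'$ with $|P'| = n-(k-2)$ and the same residue $k-|P'| \equiv_{k-2} k-n$, hence the same $j$. Because the ear is empty, no interior point of $P$ is stranded outside $\conv(P')$, so $i(P') \geq i(P) \geq j$ and the inductive hypothesis applies to $P'$ whenever $|P'| \geq k$; the base cases $n=k$ (a simple polygon through all points, which every set admits) and $i=0$ (the outerplanar embedding of Gritzmann et al.~\cite{gritzmohar}) are immediate. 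To glue the ear back I would strengthen the induction hypothesis to produce a $k$-angulation of $P'$ whose outer cycle uses the prescribed hull edge $p_1p_k$, so that attaching the ear outside that edge preserves $2$-connectivity.

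The main obstacle is guaranteeing a suitable ear, i.e.\ an empty convex $k$-gon at the boundary that strands no interior point, in the worst configurations where interior points are spread so that no boundary $k$-gon is empty. In exactly these configurations the required $k$-angulation must thread almost all interior points onto long spines shared between consecutive $k$-gons, and whether an arbitrary point set with precisely $j$ interior points always admits such a threading is, I believe, the genuine difficulty hidden in the conjecture. I would attack this sub-problem by a rotational sweep: fix one hull edge, order the remaining points angularly, and greedily grow the shared spine so that each side closes into a simple $k$-gon, reducing the question to a parity argument on the angular order. Making this sweep robust to arbitrary interior distributions, rather than relying on the generic slack available when $n \geq 2k^2$, is where I expect the real work to lie.
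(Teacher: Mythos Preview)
The statement you are attempting to prove is not proved in the paper; it is explicitly stated as a \emph{conjecture}. The authors remark only that ``it is not too hard to prove this conjecture for $k\leq 6$ using an approach similar to that of this paper'' and that ``higher values of $k$ seem to require a significant modification of our methods.'' So there is no proof in the paper to compare against, and your proposal should be read as a research plan rather than as an alternative argument.

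As a plan it is honest about its own gap: the inductive reduction stands or falls on the existence of the empty boundary ear, and you yourself flag that this is exactly the hard sub-problem. I would add two concrete cautions. First, your account of where $n\geq 2k^2$ is spent is incomplete: besides the $\theta$-subgraph argument, the bound $|U|>2k$ is also used in Cases~1(b) and~2(b) to leave room for the adjustable pontoons $R$ (and $L$) while keeping the inner cycle $Z'$ nonempty; any tightening must handle those uses too, so replacing $|U|>2k$ by ``one surviving spoke'' is not by itself sufficient. Second, in your induction step the phrase ``delete its $k-2$ interior vertices $p_2,\dots,p_{k-1}$'' hides a real issue: if any of $p_2,\dots,p_{k-1}$ is an interior point of $P$ (which is forced once the convex hull has fewer than $k-1$ points outside the chord $p_1p_k$), then $i(P')<i(P)$ and the inequality $i(P')\geq j$ can fail, so the inductive hypothesis need not apply. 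Restricting to ears with $p_2,\dots,p_{k-1}$ all on $\partial\conv(P)$ makes the existence question strictly harder. These are precisely the kinds of obstructions that, as the authors note, seem to demand genuinely new ideas beyond the paper's methods.
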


\noindent It is not too hard to prove this conjecture for $k\leq6$ using an approach similar to that of this paper. 
%
%
Higher values of $k$ seem to require a significant modification of our methods.

\medskip

The proof of Theorem~\ref{mainthm} is constructive in the sense that it gives an efficient algorithm to compute $k$-angulations.

\begin{theorem}
Given an integer $k \geq 3$ and a set $P$ of $n \geq 2k^2$ points in general position in the plane, there is an $O(kn + n \log n)$ time algorithm that either computes a $k$-angulation on $P$ or reports that no such $k$-angulation exists.
\end{theorem}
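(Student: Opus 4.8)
The plan is to show that each step in the constructive proof of Theorem~\ref{mainthm} can be carried out within the stated time bound, so that the algorithm simply follows the case analysis of the proof. First I would compute the convex hull of $P$ in $O(n\log n)$ time and identify the interior points; comparing their number with $j$ settles the ``only if'' direction (Proposition~\ref{lem:interiorpoints}) and decides feasibility. When $j=0$ we must invoke the outerplanar embedding result of Gritzmann et al.~\cite{gritzmohar}; the algorithm of Bose~\cite{Bose02} computes such a drawing in $O(n\log n)$ time, which is within budget. For $j\ge 1$ the work is to build the wheel triangulation, run Algorithm~\ref{alg:add}, build the pontoons, and then partition the weak dual into blocks of order $k-2$.

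The second step is to implement the wheel triangulation: pick an interior point $z$, sort $P\setminus\{z\}$ radially about $z$ in $O(n\log n)$ time, and add the $n-1$ spokes and the outer cycle, giving a triangulation with $n-1$ triangles. Simultaneously I would classify each hull vertex as reflex or non-reflex and locate the (at most one) maximal bad convex path; this is a single scan, $O(n)$ time. Running Algorithm~\ref{alg:add} to add $m=j-1\le k-4$ triangles costs $O(n)$ per triangle in the worst case (it may lap $C'$), so $O(kn)$ overall, which is where the $kn$ term first appears; maintaining $C'$ as a doubly linked list makes each triangle addition $O(1)$ once the reflex vertex is found. Identifying the paths $A$, $U$, $L$ and the trees $T_i$, and deciding which of the four cases applies, is again a linear scan of $C$ and of the weak dual after triangle additions.

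The third step is building pontoons and choosing block partitions. Each pontoon over a good convex path of order $\ell$ deletes $O(\ell)$ interior edges and adds $O(\ell)$ new edges fanning out from $s(A)$, and the pontoons are built over disjoint pieces of $A$, so the total pontoon work is $O(n)$. In Cases~$1(b)$, $2(a)$, $2(b)$ the order of the auxiliary pontoons $R$ and $L$ must be tuned so that no block starts/ends inside a tree and so that a section has order divisible by $k-2$; since there are fewer than $m<k-2$ forbidden starting positions and $|U|>2k$, a valid choice exists and can be found by scanning the $O(k)$ candidate offsets, costing $O(k+n)$. Forming the spiral (or two-section) Hamiltonian path $H$ on the $n-1+m$ vertices of $G^*$ and cutting it into blocks of size $k-2$ is $O(n)$, and then deleting the corresponding primal edges and reporting $G'$ is $O(n)$. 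Summing over all steps gives $O(kn+n\log n)$.

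The main obstacle is not any single difficult subroutine but the bookkeeping needed to guarantee that every geometric and combinatorial quantity used in the proof --- radial orders, reflex/non-reflex status of the evolving cycle $C'$, the tree structure of $G^*-E(Z)$, the clockwise orderings $\sigma$ of the outer section, and the correspondence between dual blocks and primal edges --- can be maintained incrementally in constant or amortised-constant time per update, rather than recomputed. In particular one must argue that Algorithm~\ref{alg:add}'s possibly many laps around $C'$ still only cost $O(kn)$ in total (each lap either adds a triangle or else $C'$ has become convex and the algorithm halts, so the number of vertex visits between successive triangle additions telescopes), and that locating the bad convex path, the pontoon bases, and the trees can be done with a fixed number of linear passes. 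Once this is set up, the fact that the partition into blocks and the extraction of the $2$-connected $\theta$-subgraph are purely local operations makes the remaining analysis routine.
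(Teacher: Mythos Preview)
Your proposal is correct and follows essentially the same approach as the paper: both trace through the constructive proof of Theorem~\ref{mainthm}, charging $O(n\log n)$ to the convex hull and radial sort, $O(kn)$ to the at most $m<k$ laps of Algorithm~\ref{alg:add}, and $O(n)$ to pontoon building, dual construction, and block extraction. The only notable difference is that you claim the offset for $R$ (and $L$) can be chosen in $O(k+n)$ time by computing the residue class of each forbidden tree position, whereas the paper simply iterates over all $k-2$ candidate offsets and rechecks the block starts, charging $O(kn)$; your refinement is valid but does not change the overall bound since the $O(kn)$ term is already incurred by Algorithm~\ref{alg:add}.
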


\begin{proof}
The proof of Theorem~\ref{mainthm} describes the algorithm. Here we outline the complexity of the main steps. 
The first step is to compute the convex hull boundary of $P$, a task that can be done in $O(n \log n)$ time using standard methods (see for example \cite{deBergetal}). 
If there are less than $j$ interior points the algorithm terminates here and reports that no $k$-angulation exists (by Proposition~\ref{lem:interiorpoints}).

The case $j=0$ requires drawing an outerplanar $k$-angulation on $P$. 
If we select an outerplanar $k$-angulation whose dual is a path, Bose \cite{Bose02} gives an $O(n \log n)$ time algorithm to do this.
For $j\geq 1$, constructing the wheel triangulation, which requires sorting the points radially about $z$, can be done in $O(n \log n)$ time. 
The case $j=1$ is finished after the edges are removed from the initial triangulation to make a $k$-angulation; this takes $O(n)$ time.

For $j \geq 2$, we go through the major steps used in Algorithm~\ref{alg:add} and in the construction of the $k$-angulation. 
Whether an external vertex is reflex can be determined in $O(1)$ time. 
Thus, a bad convex path $B$ can be computed in $O(n)$ time.
In certain examples, Algorithm~\ref{alg:add} may find as few as one triangle to add per lap around $C$, so the number of steps is $O(kn)$. 
Having determined the set $S$, the paths $A$ and $U$, and thus the case distinction, can be calculated in $O(n)$ time.

Building a pontoon of order $p$ takes $O(p)$ time, so the pontoons joining parts of $S$ can be built in $O(n)$ time.
The dual graph can also be constructed with standard methods in $O(n)$ time. 
Computing the order of a pontoon such that the last block of the partition contains exactly $k-2$ vertices can be done in $O(n)$ time. 
In Cases~$2(a)$ and $2(b)$, we need to compute the order of a pontoon $R$ such that \emph{all} resulting blocks have valid starting points, that is, starting points that are not inside a tree. 
The order of $R$ may be between $0$ and $k-3$. 
In these cases, we simply iterate through each of the $k-2$ possibilities and check if all the resulting blocks have valid starting points. 
This takes $O(kn)$ time. 
The partition of the dual graph into blocks can be computed by traversing the dual graph in the manner described in $O(n)$ time. 
Finally, removing the edges to form a $k$-angulation takes $O(n)$ time.
In total, we get a running time of $O(kn + n \log n)$.
\end{proof}


\bibliographystyle{abbrv}
\bibliography{cites}

\end{document}